\documentclass[11pt, a4paper]{article}
\usepackage{RAP_Style}
\usepackage{RAP_Defs}

\title{A homotopy classification of $\mathrm{Spin}(7)$-structures with applications to exceptional Riemannian holonomy}
\author{Ra\'{u}l Alvarez-Pati\~{n}o\footnote{Partially supported by CONAHCYT M\'{e}xico under doctoral scholarship no. 701866.}}
\address{Instituto de Matem\'{a}ticas (Cuernavaca) \\ Universidad Nacional Aut\'{o}noma de M\'{e}xico (UNAM)}
\email{ralvarepatino@gmail.com}
\date{}
\subjclass{Primary 53C10, 57R15; Secondary 53C25, 53C27}
%\keywords{$\lie{Spin}{7}$-structures, $G_{2}$-structures, Exceptional Riemannian Holonomy}

\begin{document}
\maketitle
\begin{abstract}    
We use classical obstruction theory \`{a} la Eilenberg-Steenrod to obtain a homotopy classification of $\mathrm{Spin}(7)$-structures on compact $8$-manifolds with abelian fundamental group. As an application we show that a compact, connected Riemannian $8$-manifold with holonomy contained inside the group $\mathrm{Spin}(7)$ has exactly two $\mathrm{Spin}(7)$-structures extending the induced $G_{2}$-structure on the boundary.
\end{abstract}
\keywords{Algo \and algo mas}
\section{Introduction}
Bonan~\cite{bon.SVRG2S7.66}, Gray and Green~\cite{gragree.STTA.70}, and Marisa Fern\'{a}ndez~\cite{fer.CRMS7.86} were among the first authors to consider manifolds of dimension eight endowed with $\lie{Spin}{7}$-structures. Since then, the study of geometric and topological properties of manifolds with $\lie{Spin}{7}$-structures of different types has become a central topic in Riemannian geometry. This paper aims to complete the circle of ideas initiated in~\cite{gragree.STTA.70} by establishing two results. \cref{Thm.mainA} below gives a somewhat general classification of $\lie{Spin}{7}$-structures from the point of view of homotopy theory. Our main result, \cref{Thm.mainB} at the end of this section, establishes the existence of exactly two $\lie{Spin}{7}$-structures on a compact and connected $\lie{Spin}{7}$-manifold.
\begin{thm}
\label{Thm.mainA}
Let $W$ be a compact $8$-manifold with perfect fundamental group. Suppose there is a $\lie{Spin}{7}$-structure on $W$. Then the set of $\lie{Spin}{7}$-structures on $W$ that extend the induced $G_{2}$-structure on the boundary $\partial W$ has a free and transitive action of the group $\cohomrelg{8}{W}{\partial W}{\zn{}_{2}}$.
\end{thm}
Crowley and Nordstr\"{o}m showed that any compact $7$-manifold $M$ with a $G_{2}$-structure admits a $\lie{Spin}{7}$-coboundary, that is, $M$ bounds a spin $8$-manifold endowed with a $\lie{Spin}{7}$-structure~\cite[Lemma 3.4]{crownord.NIG2S.15}. \cref{Thm.mainA} complements this result by giving an explicit description of the set of $\lie{Spin}{7}$-structures extending a $G_{2}$-structure specified on the boundary. On the other hand, the hypothesis for the fundamental group in \cref{Thm.mainA} is motivated by an early result of Joyce stating that a Riemannian $8$-manifold with holonomy inside the group $\lie{Spin}{7}$ is necessarily simply-connected~\cite[Theorem C]{joyce.CMHS7.96}. We provide a deeper reason for our assumption on $\pi_{1}W$ in \cref{Rmk.secdiff}. 
\subsection{Preliminaries and the statement of the main Theorem}
\label{Sub.prelim}
The combined works of Hitchin~\cite{hitch.HARSPI.74}, Wang~\cite{wang.PSPF.89, wang.SCMPS.95}, McInnes~\cite{mcinn.EPSNSC.98}, and Moroianu and Semmelmann~\cite{mose.PSHG.00}, imply that a compact Riemannian manifold with special holonomy is characterized by the existence of non-trivial parallel spinors. Moreover, a compact Riemannian manifold carrying a non-trivial parallel spinor must be Ricci-flat (see Hitchin~\cite[Theorem 1.2]{hitch.HARSPI.74} or Friedrich~\cite[Folgerung 1]{fried.EPSRM.81}) and non-symmetric, because symmetric manifolds have Einstein metrics with non-zero scalar curvature by the results of Besse~\cite[Chapter 7 Section F]{besse.EINMAN}.

Let $H \subset \lie{SO}{n}$ be the holonomy group of an irreducible, non-symmetric, Ricci-flat manifold of dimension $n \geq 3$. Berger's classification~\cite{berg.SLGH.55} implies that $H$ is simply-connected. In particular, $H$ admits a lifting into the universal covering of $\lie{SO}{n}$, the spin group $\lie{Spin}{n}$, with image contained inside the set of all elements that fix a unit spinor in the spin representation of $\lie{Spin}{n}$ (see \cref{Rmk.bigdim} below). On the other hand, a metric with special holonomy reduces the structure group of the tangent bundle of the underlying manifold from $\lie{SO}{n}$ down to $H$. Therefore, the existence of a spin structure supporting a non-zero spinor field is a necessary condition for a compact manifold to admit Ricci-flat metrics of special holonomy.

There are two exceptional cases in the classification of holonomy representations corresponding to Ricci-flat manifolds. The group $G_{2} \subset \lie{SO}{7}$ with a representation in $\rn{7}$ as the automorphism group of the imaginary octonions, and $\lie{Spin}{7}$ acting on $\rn{8}$ via the (real) spin representation $\Delta_{7}$. The copy of $G_{2}$ inside $\lie{Spin}{7}$ coincides with the stabilizer of a unit spinor in the spin representation $\Delta_{7}$. Meanwhile, $\lie{Spin}{7}$ has two liftings into $\lie{Spin}{8}$ corresponding to isotropy subgroups of unit spinors of positive or negative chirality, respectively. A third $\lie{Spin}{7}$-subgroup of $\lie{Spin}{8}$ is obtained as the stabilizer of a unit element in the vector representation $\mathrm{Ad}_{8}$. Varadarajan~\cite[Section 1 Theorem 5]{vara.SP78.01} showed that these subgroups exhaust all possibilities, as they represent every conjugacy class of $\lie{Spin}{7}$-subgroups of $\lie{Spin}{8}$. Moreover, the intersection of any two such $\lie{Spin}{7}$-subgroups gives the copy of $G_{2}$ inside $\lie{Spin}{7}$ associated with the holonomy representation~\cite[Section 3 Theorem 5]{vara.SP78.01}. It is worth mentioning that the two chiral representations $\Delta_{8}^{+}$, $\Delta_{8}^{-}$ and the vector representation $\mathrm{Ad}_{8}$ are permuted by the elements of the outer automorphism group of $\lie{Spin}{8}$. This phenomenon, commonly known as principle of triality, was discovered by \`{E}lie Cartan and can be read from the $S_{3}$-symmetry of the Dynkin diagram of $\lie{SO}{8}$ (see Michelson-Lawson~\cite[Chapter 8]{law.SG}, De Sapio~\cite{dsap.OS8TTA.01} or Baez~\cite[Section 2.4]{baez.TOCT.02}).

The positive spinor bundle of a spin $8$-manifold $W$ is an orientable $8$-plane bundle associated with a spin structure on $W$ via the right-handed spin representation $\Delta_{8}^{+}$. We say that $W$ has a geometric $\lie{Spin}{7}$-structure if there is a positive spinor bundle $S^{+}_{W}$ over $W$ carrying a right-handed spinor field $\psi \in \Gamma(S^{+}_{W})$ with norm $\abs{\psi} = 1$ everywhere on $W$. A $\lie{Spin}{7}$-structure is the homotopy class represented by a path $\psi_{t} \in \Gamma(S^{+}_{W})$ with $\abs{\psi_{t}} = 1$ for all $t \in [0,1]$ and at every point of $W$ (cf \cref{Def.red}). Then $W$ has a $\lie{Spin}{7}$-structure if and only if the Euler class $e(S^{+}_{W}) = 0$. Gray and Green~\cite[Theorem 3.4]{gragree.STTA.70} found that $e(S^{+}_{W})$ can be expressed in terms of Pontrjagin classes $p_{1}(TW), p_{2}(TW)$ and the Euler class $e(TW)$ of the tangent bundle of $W$ according to
\begin{equation*}
\label{Eq.graygreen}
16 \, e(S^{+}_{W}) = 4 p_{2}(TW) - p_{1}^{2}(TW) + 8 e(TW).
\end{equation*}
This means that the existence of a $\lie{Spin}{7}$-structure on $W$ is a purely topological condition on the tangential information of $W$ that does not depend on a specific choice of spin structure. Accordingly, a geometric $\lie{Spin}{7}$-structure on $W$ is defined by a pair $(\Omega, g)$, where $\Omega$ is a $4$-form on $W$ depending quadratically on the unit spinor field $\psi$ (see Lucia Mart\'{i}n-Merch\'{a}n~\cite[Proposition 2.4]{merch.SCS7.20}), and $g$ is a Riemannian metric determined by $\Omega$ in a highly non-linear fashion (see Karigiannis~\cite[Theorem 4.3.3]{kar.DG2S7S.05}). Note that $\Omega^{2}$ is non-vanishing and consequently determines an orientation for $W$. A well-known result of Marisa Fern\'{a}ndez~\cite[Theorem 5.3]{fer.CRMS7.86} implies that the metric $g$ has $\lie{Spin}{7}$-holonomy if and only if $\Omega$ is closed, and this happens precisely when $\psi$ is parallel. In such case, one says that the corresponding $\lie{Spin}{7}$-structure is torsion-free.

The first non-compact examples of Riemannian manifolds with $\lie{Spin}{7}$-holonomy were discovered by Bryant~\cite{brya.MEH.87}. Later on, Bryant and Salamon~\cite{brsa.OCCMEH.89} extended the catalogue to include complete but still non-compact manifolds. The compact case was finally settled by Joyce in~\cite{joyce.CMHS7.96, joyce.NCMS8.99}. Those manifolds constructed by Joyce were the first examples of Ricci-flat $8$-manifolds of non-K\"{a}hler type. Another key result implies that the holonomy of a compact, simply-connected, $8$-manifold $W$ with a torsion-free $\lie{Spin}{7}$-structure is controlled by the index of the Dirac operator. More precisely, the $\hat{A}$-genus $\hat{A}(W)$ determines the holonomy $H$ by the following criteria; $H = \lie{Spin}{8-k}$ if and only if $\hat{A}(W) = k$, for $k = 1, 2, 3, 4$~\cite[Theorem C]{joyce.CMHS7.96}. This together with the Cheeger-Gromoll splitting Theorem~\cite{chegrom.STMNNR.71} implies that $W$ must be simply-connected. This result by Joyce is the strongest obstruction to the existence of Riemannian metrics with $\lie{Spin}{7}$-holonomy known to this day. Combining \cref{Thm.mainA} with~\cite[Theorem C]{joyce.CMHS7.96} we get our main result.
\begin{thm}
\label{Thm.mainB}
Let $W$ be a compact, connected $8$-manifold with boundary $\partial W$. Suppose that $W$ has a Riemannian metric with holonomy contained inside the group $\lie{Spin}{7}$. Then there are exactly two $\lie{Spin}{7}$-structures on $W$ that extend the induced $G_{2}$-structure on the boundary.
\end{thm}
\begin{org*}
In \cref{Sec.red} we formulate the existence and uniqueness problem of reductions of the structure group of a vector bundle as a lifting problem associated with a canonical fibration between the corresponding classifying spaces. \cref{Sec.obs} provides a concise summary of the main results of obstruction theory. We emphasize \cref{Sub.diffcocy}, where the comparison between two reductions of the structure group is implemented by an affine difference function. In \cref{Sec.spinrep} we recall all the necessary notions of spin geometry in dimensions $7$ and $8$ for later use in \cref{Sec.sphfibstab} where explicit identifications between isotropy subgroups of unit spinors and certain spin subgroups are given. The proof of \cref{Thm.mainA} and some final remarks for future work are presented in \cref{Sec.spin7}. 
\end{org*}
\begin{akn*}
The author is very grateful to Professors; Gregor Weingart, Rustam Sadykov, H\'{e}ctor Hugo Garc\'{i}a Compe\'{a}n and Pablo Su\'{a}rez Serrato for their continuous advice, help, and encouragement throughout author's career. Finally, the author would like to take the opportunity to thank Natalia Contreras for the unconditional support that made this work possible.
\end{akn*}
\section{Reductions of the structure group of a vector bundle}
\label{Sec.red}
Let us consider a pair of compact Lie groups $H$ and $G$ related by a smooth homomorphism $\iota : H \rightarrow G$ with kernel $\kr \iota$. The induced map $\bg{\iota} : \bg{H} \rightarrow \bg{G}$ between classifying spaces defines a Hurewicz fibration 
\begin{equation}
\label{Eq.hurfib}
F \longrightarrow \bg{H} \stackrel{\bg{\iota}}{\longrightarrow} \bg{G}.
\end{equation}
The homotopy fibre $F = \mathrm{Hofib}(\bg{\iota})$ is homotopy equivalent to the quotient space $G/H$ if $\kr \iota$ is trivial, and is given by the classifying space $\bg{\kr \iota}$ when $\iota$ is surjective. Milnor's construction ~\cite[Section 3]{mil.CUB2.56} together with our compactness assumption ensures that $\bg{H}$ and $\bg{G}$ are countable CW complexes. The same is true for the corresponding universal spaces $\mathrm{E}H$ and $\mathrm{E}G$.  From now on, $X$ will denote a finite CW complex.
\begin{definition}
\label{Def.red}
Let $S : X \rightarrow \bg{G}$ be the classifying map of a vector bundle $S_{X}$ over $X$ with structure group $G$. A geometric $H$-reduction for $S_{X}$ is a homotopy lifting $\hat{S} : X \rightarrow \bg{H}$ of the classifying map $S$ along the projection $\bg{\iota}$. An $H$-structure for $S_{X}$ is the homotopy class represented by a continuous path of geometric $H$-reductions for $S_{X}$. We write $SX$ to denote the total space of the bundle $S_{X}$.
\end{definition}
\begin{remark}
A homotopy lifting of $S$ along a projection $\bg{\iota}$ is a map $\hat{S}$ such that the composition $\bg{\iota} \circ \hat{S}$ is homotopic to $S$. From now on we will use the term lifting in place of homotopy lifting.
\end{remark}
\begin{remark}
\label{Rmk.verthom}
Denote by $S^{\ast} \bg{H} \rightarrow X$ the pullback of \cref{Eq.hurfib} along $S : X \rightarrow \bg{G}$. Note that a lifting for $S$ and a section of $S^{\ast} \bg{H} \rightarrow X$ define each other. For this reason, we identify the set of all geometric $H$-reductions of $S_{X}$ with the set of sections $\Gamma(S^{\ast} \bg{H})$ of the fibration $S^{\ast} \bg{H} \rightarrow X$. Moreover, two liftings for $S$ are homotopic if and only if the corresponding sections of $S^{\ast} \bg{H}$ are homotopic. In general, two sections of a given fibration are homotopic if and only if they are vertically homotopic~\cite[Lemma 1.1]{jamtho.NCCS.66}; that is, they are homotopic through a path of sections. Therefore, if we denote by $H(S_{X})$ the set of all $H$-reductions of $S_{X}$, then there is a natural bijection
\begin{equation}
\label{Eq.hlift}
H(S_{X}) \simeq \pi_{0} \Gamma(S^{\ast} \bg{H}),
\end{equation}
where $\pi_{0}\Gamma(S^{\ast} \bg{H})$ denotes the set of connected components of the space of sections $\Gamma(S^{\ast} \bg{H})$. From now on we make no distinction between homotopies and vertical homotopies when dealing with liftings.
\end{remark}
The following example serves as the basic model for geometric $\lie{Spin}{7}$-structures and $G_{2}$-structures, respectively.
\begin{example}
\label{Ex.sphfib}
Suppose that $G$ is a compact and connected Lie group, and let $\rho : G \rightarrow \lie{SO}{m+1}$ be a smooth and faithful representation of $G$ into the Euclidean space $\rn{m+1}$. Define $H \subset G$ as the isotropy subgroup of the first coordinate vector $e_{0} \in \rn{m+1}$. Consider the associated $G$-vector bundle $S_{X}$ corresponding to $\rho$. Then $S_{X}$ has an inner product $\braket{\cdot}{\cdot}$ and an orientation induced by $\rho$. A geometric $H$-reduction for $S_{X}$ is completely determined by a choice of nowhere vanishing section $s : X \rightarrow SX$ normalized so that $\braket{s}{s} = \abs{s}^{2} = 1$ everywhere. Such a section exists precisely when the Euler class $e(S_{X})$ equals zero. Moreover, any two such sections induce the same $H$-structure for $S_{X}$ if and only if they are homotopic through a path of sections of the associated $m$-sphere bundle formed by all unit vectors of $S_{X}$. If the group $G$ acts transitively on the sphere $S^{m} \subset \rn{m+1}$, then there is a diffeomorphism identifying the quotient space $G/H$ with $S^{m}$. In this case, the associated fibration \cref{Eq.hurfib} is spherical and takes the form
\begin{equation}
\label{Eq.sphfib}
S^{m} \rightarrow \bg{H} \rightarrow \bg{G}.
\end{equation}
Montgomery and Samelson~\cite{mon.TGS.43} showed that the classical groups $\lie{SO}{n}$, $\lie{U}{n}$, $\lie{SU}{n}$, $\lie{Sp}{n}$, $\lie{Sp}{n} \cdot \lie{SO}{2}$, $\lie{Sp}{n} \cdot \lie{Sp}{1}$, together with the exceptional groups; $G_{2}, \lie{Spin}{7}$, and $\lie{Spin}{9}$ are the only compact Lie groups acting effectively and transitively on a finite-dimensional sphere.
\end{example}
\section{Elements of obstruction theory}
\label{Sec.obs}
In this section we present without proof the main results of obstruction theory. The reader is refereed to ~\cite[Part III]{steen.TFB} for details. In \cref{Def.primdiff} the primary difference $D_{0}$ between two $H$-structures is introduced. Then \cref{Lem.transact} characterizes a free and transitive action in terms of the properties satisfied by $D_{0}$. Perhaps the novel feature in our exposition is the application of \cref{Lem.transact} in the classification of reductions of the structure group of a vector bundle (see \cref{Cor.ftransact}).
\subsection{The obstruction cocycle and the existence of $H$-structures}
\label{Sub.obscocy}
For us, a relative CW complex $(X, Y)$ will mean a CW pair $(X, Y)$ with $X$ finite, and where $Y \subset X$ is a proper, closed, and possibly empty subspace. Consider a $G$-vector bundle $S_{X}$ over $X$ classified by a map $S : X \rightarrow \bg{G}$ as in \cref{Def.red}. Motivated by \cref{Ex.sphfib}, we further assume that the homotopy fibre $F = \mathrm{Hofib}(\bg{\iota})$ in equation \cref{Eq.hurfib} is homotopy equivalent to a $7$-sphere $S^{7}$. This hypothesis also allows us to use cohomology with constant coefficients.

Given an integer $k \geq 0$, write $X^{(k)}$ to denote the $k$-skeleton of $X$. Suppose we have defined a partial lifting $\hat{S} : Y \cup X^{(k)} \rightarrow \bg{H}$ for $S$ over the subcomplex $Y \cup X^{(k)}$. Following~\cite[Theorem 4.11 p 136]{cohen.TFB}, every $(k+1)$-cell $e^{k+1}$ attached to $X$ via $\alpha : \partial e^{k + 1} \rightarrow X^{(k)}$ induces a commutative diagram
\begin{equation}
\label{Eq.obstprob}
\begin{split}
\xymatrix@R=8mm@C=8mm{
\partial e^{k+1} \ar[r]^{\alpha} \ar@{^{(}->}[d]  & Y \cup X^{(k)} \ar[r]^{\hat{S}} \ar@{^{(}->}[d] & \bg{H} \ar[d]^{\bg{\iota}} \\
e^{k+1} \ar@{^{(}->}[r] & Y \cup X^{(k+1)} \ar[r]_{S} & \bg{G} 
}
\end{split}
\end{equation}
where the leftmost square of \cref{Eq.obstprob} is a push-out diagram, and the hooked arrows represent inclusion maps. Note that the composition $\bg{\iota} \circ \hat{S} \circ \alpha$ factors though $e^{k+1}$. By identifying $e^{k+1}$ with the cone over the $k$-sphere $\partial e^{k+1}$ we immediately get a null-homotopy for $\bg{\iota} \circ \hat{S} \circ \alpha$. This in turn implies that $\hat{S} \circ \alpha$ is homotopic to a map with values in the fibre $S^{7}$, since $\bg{\iota}$ is the projection of a fibration. Thus, $\hat{S} \circ \alpha$ represents a well defined class $[\hat{S} \circ \alpha]$ in the $k^{th}$-homotopy group $\pi_{k}S^{7}$. By construction, $\hat{S} : Y \cup X^{(k)} \rightarrow \bg{H}$ extends over $Y \cup X^{(k)} \cup_{\alpha} e^{k+1}$ if and only if $[\hat{S} \circ \alpha] = 0$.
\begin{definition}
\label{Def.obscocycle}
The $\pi_{k}S^{7}$-valued cochain $\mathcal{O}_{k}(\hat{S}) \in C^{k+1}(X,Y;\pi_{k}S^{7})$ defined by the correspondence
\begin{equation}
\label{Eq.obscoch}
\mathcal{O}_{k}(\hat{S})(e^{k+1}) = [\hat{S} \circ \alpha]
\end{equation}
is the complete obstruction to extending an $H$-structure on $S_{X}$ from $Y \cup X^{(k)}$ to $Y \cup X^{(k+1)}$. Moreover, \cref{Eq.obscoch} is a cocycle by~\cite[Theorem 32.4 p 167]{steen.TFB}. For this reason $\mathcal{O}_{k}(\hat{S})$ is called the obstruction cocycle.
\end{definition}
\begin{remark}
\label{Rmk.highobs}
By definition, there are no obstructions for extending $\hat{S}$ from $Y$ to $Y \cup X^{(k)}$ when $k < 7$. The cohomology class $c_{0}(S_{X}) \in \cohomrelg{8}{X}{Y}{\pi_{7}S^{7}}$ represented by the corresponding obstruction cocycle \cref{Eq.obscoch} is the first potential obstruction to our extension problem. In general, there might be higher obstruction classes $c_{k}(S_{X}) \in \cohomrelg{8 +k}{X}{Y}{\pi_{7+k}S^{7}}$ associated with other non-zero values of $\pi_{\ast}S^{7}$. It follows that the classifying map $S : X \rightarrow \bg{G}$ admits a lifting to $\bg{H}$ along $\bg{\iota}$ if and only if the cohomology classes $c_{k}(S_{X}) = 0$ for $k=0, 1, \dots \dim X$. In particular, $c_{0}(S_{X})$ is the only obstruction to the existence of an $H$-reduction for $S_{X}$ when $\dim X \leq 7$. The obstruction classes are all independent of the specific choice of partial lifting. A change in $\hat{S} : Y \cup X^{(k)} \rightarrow \bg{H}$ over lower-dimensional skeleta alters the obstruction cocycle \cref{Eq.obscoch} by a coboundary~\cite[Lemma 32.1 p 166]{steen.TFB}.
\end{remark}
\subsection{The difference cocycle between two $H$-structures}
\label{Sub.diffcocy}
Now we compare two liftings $\hat{S}_{0} , \hat{S}_{1} : X \rightarrow \bg{H}$ of the classifying map $S : X \rightarrow \bg{G}$ that coincide on the subcomplex $Y \subset X$. More precisely, we will determine all the obstructions for the existence of a homotopy between $\hat{S}_{0}$ and $\hat{S}_{1}$ relative to $Y$. This is achieved essentially by applying the construction of the obstruction cocycle introduced in \cref{Def.obscocycle} to a homotopy on a convenient subspace of the product complex $X\times I$, where $I =[0,1]$ is the unit interval.

Recall that the $k$-skeleton of $X \times I$ is given by $(X \times I)^{(k)} = (X^{(k-1)} \times I) \cup (X^{(k)} \times \partial I)$ for all integers $k \geq 1$. We define the extended CW pair $(\tilde{X}, \tilde{Y})$ by
\begin{equation*}
\label{Eq.int}
\tilde{X} = X \times I, \quad \tilde{Y} = \left( Y \times I \right) \cup \left( X \times \partial I \right).
\end{equation*}
The crucial observation is that a map $\tilde{S} : \tilde{Y} \cup \tilde{X}^{(k)} \rightarrow \bg{H}$ is equivalent to a pair of continuous functions $\hat{S}_{0}, \hat{S}_{1} : X \rightarrow \bg{H}$ together with a homotopy $\hat{H}_{t} : Y \cup  X^{(k-1)}\rightarrow \bg{H}$, defined for $t \in I$, between the restrictions $\hat{S}_{0} \, | \, Y \cup X^{(k-1)}$ and $\hat{S}_{1} \, | \, Y \cup X^{(k-1)}$, and which fixes the subspace $Y$.

As in the previous section, we continue to assume that the homotopy fibre of $\bg{\iota}$ is equivalent to $S^{7}$ (see equation \cref{Eq.sphfib}). This hypothesis certainly guarantees that $\hat{S}_{0} \, | \, Y \cup X^{(k-1)}$ and $\hat{S}_{1} \, | \, Y \cup X^{(k-1)}$ are homotopic for all $k \leq 6$. Moreover, there is a homotopy $\hat{H}_{t}$ connecting $\hat{S}_{0} \, | \, Y \cup X^{(6)}$ and $\hat{S}_{1} \, | \, Y \cup X^{(6)}$ relative to $Y$ given by~\cite[Lemma 36.2 p 181]{steen.TFB}. Thus, we have all the ingredients to define a map $\tilde{S} : \tilde{Y} \cup \tilde{X}^{(7)} \rightarrow \bg{H}$ and the corresponding obstruction cocycle $\mathcal{O}_{7}(\tilde{S}) \in C^{8}(\tilde{X}, \tilde{Y} ; \pi_{7}S^{7})$ as in \cref{Def.obscocycle}. To obtain an obstruction on the original pair $(X,Y)$ we use the isomorphism
\begin{equation*}
\label{Eq.crosscoh}
\times \bar{\mathrm{I}} : C^{7}(X, Y ; \pi_{7}S^{7}) \rightarrow C^{8}(\tilde{X}, \tilde{Y} ; \pi_{7}S^{7})
\end{equation*}
induced by the cross product with the generator $\bar{\mathrm{I}} \in C^{1}(I; \zn{})$ of the group of $1$-cochains on the unit interval. We define the difference cochain $\dbraket{\hat{S}_{1}}{\hat{H}_{t}}{\hat{S}_{0}} \in C^{7}(X, Y ; \pi_{7}S^{7})$ implicitly by
\begin{equation}
\label{Eq.difcoch}
\dbraket{\hat{S}_{1}}{\hat{H}_{t}}{\hat{S}_{0}} \times \bar{\mathrm{I}} = \mathcal{O}_{7}(\tilde{S}) - \mathcal{O}_{7}(\hat{S}_{0}) \times \bar{\mathrm{0}} -  \mathcal{O}_{7}(\hat{S}_{1}) \times \bar{\mathrm{1}}
\end{equation}
where $\bar{\mathrm{0}}, \bar{\mathrm{1}} \in C^{0}(I; \zn{})$ are generators chosen so that $\delta \bar{\mathrm{0}} = - \bar{\mathrm{I}}$, and $\delta \bar{\mathrm{1}} = \bar{\mathrm{I}}$. Note that we have subtracted the term $\mathcal{O}_{7}(\hat{S}_{0}) \times \bar{\mathrm{0}} + \mathcal{O}_{7}(\hat{S}_{1}) \times \bar{\mathrm{1}}$ to make the right hand side of equation \cref{Eq.difcoch} a relative cochain. Applying the coboundary operator to the expression \cref{Eq.difcoch} we obtain
\begin{equation*}
\delta \dbraket{\hat{S}_{1}}{\hat{H}_{t}}{\hat{S}_{0}} = \mathcal{O}_{7}(\hat{S}_{0}) - \mathcal{O}_{7}(\hat{S}_{1}).
\end{equation*}
Given that both $\hat{S}_{0}$ and $\hat{S}_{1}$ are global liftings of the classifying map $S : X \rightarrow \bg{G}$, we have $\mathcal{O}_{7}(\hat{S}_{0}) =0 = \mathcal{O}_{7}(\hat{S}_{1})$. This shows that $\dbraket{\hat{S}_{1}}{\hat{H}_{t}}{\hat{S}_{0}}$ is a cocycle of degree $7$. Moreover, if $\hat{K}_{t}$ is another homotopy between $\hat{S}_{0} | X^{(6)}$ and $\hat{S}_{1}| X^{(6)}$ relative to $Y$, then $\dbraket{\hat{S}_{1}}{\hat{H}_{t}}{\hat{S}_{0}}$ and $\dbraket{\hat{S}_{1}}{\hat{K}_{t}}{\hat{S}_{0}}$ differ by a coboundary. Hence, the cohomology class represented by $\dbraket{\hat{S}_{1}}{\hat{H}_{t}}{\hat{S}_{0}}$ is independent of $\hat{H}_{t}$.
\begin{definition}
\label{Def.primdiff}
Consider the $G$-vector bundle $S_{X}$ over a finite CW complex $X$ classified by a map $S : X \rightarrow \bg{G}$ as in \cref{Def.red}. Let $\hat{S}_{0} , \hat{S}_{1} : X \rightarrow \bg{H}$ be two lifting for $S$ along the induced map $\bg{\iota}$ with homotopy fibre equivalent to $S^{7}$. The primary difference $D_{0}(\hat{S}_{1}, \hat{S}_{0})$ between $\hat{S}_{0}$ and $\hat{S}_{1}$ is the cohomology clas
\begin{equation}
\label{Eq.primdiff}
D_{0}(\hat{S}_{1}, \hat{S}_{0}) = \left[ \dbraket{\hat{S}_{1}}{\hat{H}_{t}}{\hat{S}_{0}} \right]  \in \cohomrelg{7}{X}{Y}{\pi_{7}S^{7}}.
\end{equation}
\end{definition}
\begin{remark}
\label{Rmk.secdiff}
Two $H$-structures represented by maps $\hat{S}_{0}, \hat{S}_{1} : X \rightarrow \bg{H}$ that agree on the sub-complex $Y$ are homotopic up to the $6$-skeleton, and a given homotopy extends over $Y \cup X^{(7)}$ if and only if $D_{0}(\hat{S}_{1}, \hat{S}_{0}) = 0$. In such a case, the next obstruction for an homotopy between $\hat{S}_{0}$ and $\hat{S}_{1}$ occurs on the $8$-skeleton, and it is measured by a secondary difference $D_{1}(\hat{S}_{1}, \hat{S}_{0}) \in \cohomrelg{8}{X}{Y}{\pi_{8}S^{7}}$ defined by the analogue of formula \cref{Eq.difcoch} in dimension eight. To pass to the next stage of this iterative process, it is necessary and sufficient that $D_{1}(\hat{S}_{1}, \hat{S}_{0}) = 0$. This procedure continues, one skeleton at a time, until the higher-dimensional skeleton is reached, provided that the corresponding difference is zero.
\end{remark}
\begin{remark}
\label{Rmk.properties}
The primary difference $D_{0}$, as any other subsequent difference, solely depends on the homotopy class of its arguments. Let $\Gamma = H(S_{X})$ be the set of all $H$-reductions for $S_{X}$ (see equation~\cref{Eq.hlift}) and set $\mathcal{H} = \cohomrelg{7+k}{X}{Y}{\pi_{7+k}S^{7}}$. Then each partial difference determines a well-defined function $D : \Gamma \times \Gamma \rightarrow \mathcal{H}$ with the following properties. First, it satisfies the cocycle condition; $D(\hat{S}_{2}, \hat{S}_{0}) = D(\hat{S}_{2}, \hat{S}_{1}) + D(\hat{S}_{1}, \hat{S}_{0})$ by~\cite[Theorem 36.6 p 182]{steen.TFB}. Second, $D(\hat{S}_{1}, \hat{S}_{0}) = 0$ if and only if $\hat{S}_{0}, \hat{S}_{1} \in \Gamma$ are homotopic~\cite[Theorem 36.4 p 181]{steen.TFB}. Finally, given $\hat{S}_{0} \in \Gamma$ and $h \in \mathcal{H}$, there is a unique $\hat{S}_{1} \in \Gamma$ such that $D(\hat{S}_{1}, \hat{S}_{0}) = h$~\cite[Lemma 33.9 p 174]{steen.TFB}.
\end{remark}
\begin{lemma}
\label{Lem.transact}
Let $\mathcal{H}$ be an abelian group and $\Gamma$ any set. Then a free and transitive action $\mathfrak{a} : \mathcal{H} \times \Gamma \rightarrow \Gamma$ is equivalent to the existence of an affine difference function $D : \Gamma \times \Gamma \rightarrow \mathcal{H}$ satisfying the following properties.
\begin{enumerate}[label={\alph*)}]
\item\label{It.adp1} For every triple $x,y,z \in \Gamma$, the cocycle condition $D(x,z) = D(x,y) + D(y,z)$ holds.
\item\label{It.adp2} $D(x,y) = 0$ if and only if $x=y$.
\item\label{It.adp3} Given $x \in \Gamma$ and $h \in \mathcal{H}$, there is a unique solution $y \in \Gamma$ to the equation $D(x,y) = h$.
\end{enumerate}
\end{lemma}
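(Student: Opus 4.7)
The plan is to prove the equivalence by constructing one structure from the other in each direction. For the forward direction, assume $\mathfrak{a}\colon \mathcal{H} \times \Gamma \to \Gamma$ is free and transitive. I would define $D(x,y)$ to be the unique element $h \in \mathcal{H}$ satisfying $\mathfrak{a}(h,y) = x$, with existence guaranteed by transitivity and uniqueness by freeness. Property \ref{It.adp2} is immediate from the identity axiom of a group action, while \ref{It.adp3} follows by taking $y = \mathfrak{a}(-h,x)$. The cocycle condition \ref{It.adp1} reduces to the fact that $\mathfrak{a}$ is a group homomorphism from $\mathcal{H}$ into the permutation group of $\Gamma$: combining $\mathfrak{a}(D(x,y), y) = y$-wait, I mean combining $\mathfrak{a}(D(x,y), y) = x$ with $\mathfrak{a}(D(y,z), z) = y$ yields $\mathfrak{a}(D(x,y) + D(y,z), z) = x$, so freeness forces $D(x,z) = D(x,y) + D(y,z)$.

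For the reverse direction, suppose $D$ satisfies \ref{It.adp1}--\ref{It.adp3}. I would first extract two elementary consequences of the axioms: specializing \ref{It.adp1} with $x = y = z$ and applying \ref{It.adp2} gives $D(x,x) = 0$, after which \ref{It.adp1} yields the antisymmetry $D(y,x) = -D(x,y)$. With this in hand, for each pair $(h,x) \in \mathcal{H} \times \Gamma$ property \ref{It.adp3} provides a unique $y \in \Gamma$ with $D(x,y) = -h$, equivalently $D(y,x) = h$; I would then define $\mathfrak{a}(h,x) := y$. The identity axiom $\mathfrak{a}(0,x) = x$ is precisely \ref{It.adp2}, the composition axiom is a direct translation of the cocycle relation \ref{It.adp1}, freeness is another application of \ref{It.adp2}, and transitivity is \ref{It.adp3}.

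That the two constructions are mutually inverse is tautological from the defining equivalence $\mathfrak{a}(h,y) = x \Longleftrightarrow D(x,y) = h$. The argument is entirely formal and I do not anticipate any substantive obstacle. The only step requiring genuine care is the choice of sign convention when translating between $D$ and $\mathfrak{a}$, so that addition in $\mathcal{H}$ matches composition of actions rather than its opposite; this is precisely what makes the preliminary derivation of $D(x,x) = 0$ and $D(y,x) = -D(x,y)$ in the second paragraph necessary before defining the action. The statement is in essence the standard characterization of an $\mathcal{H}$-torsor (principal homogeneous space over an abelian group) in terms of an affine difference function.
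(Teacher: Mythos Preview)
Your proposal is correct and follows essentially the same route as the paper's proof: construct $D$ from $\mathfrak{a}$ using freeness and transitivity and verify (a)--(c), then construct $\mathfrak{a}$ from $D$ via property~(c) and check the action axioms from (a) and (b). The only cosmetic difference is the sign convention---the paper takes $D(x,y)$ to be the element sending $x$ to $y$ rather than $y$ to $x$---which is exactly the point of care you anticipated.
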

\begin{proof}
Assume we have an action $\mathfrak{a} : \mathcal{H} \times \Gamma \rightarrow \Gamma$, denoted simply by $\mathfrak{a}(h,x) = hx$. If $\mathfrak{a}$ is free and transitive, then every pair $(x,y) \in \Gamma \times \Gamma$ defines a unique element $D(x,y) \in \mathcal{H}$ such that $y=D(x,y)x$. This gives us the affine difference function $D : \Gamma \times \Gamma \rightarrow \mathcal{H}$. Now we show that $D$ satisfies properties \cref{It.adp1,It.adp2,It.adp3}. First we check the cocycle condition. For every triple $x,y,z \in \Gamma$ we have expressions $y = D(x,y) x$ and $z = D(y,z) y$. Substituting we get
\begin{equation*}
z = D(y,z)y = D(y,z) \left( D(x,y)x \right) = \left( D(x,y) + D(y,z) \right)x.
\end{equation*}
The uniqueness of $D(x,z) \in \mathcal{H}$ satisfying $z = D(x,z)x$ implies that $D(x,z) = D(x,y) + D(y,z)$. Now we verify property \cref{It.adp2}. Since $\mathfrak{a}$ is free, $0 \in \mathcal{H}$ is the only group element fixing a given $x \in \Gamma$; that is, $D(x,y) = 0$ if and only if $x=y$. Finally property \cref{It.adp3}. 
Let $x \in \Gamma$ and $h \in \mathcal{H}$ and suppose that $D(x,z)=h$ has two solutions $z=y$ and $z=y'$. Then $D(x,y) = D(x,y')$. After straightforward manipulations using $D(x,y) = -D(y,x)$ one gets $D(y,y') = 0$. Hence $y = y'$ by property \cref{It.adp2}.

Conversely, if an affine difference $D : \Gamma \times \Gamma \rightarrow \mathcal{H}$ satisfying properties \cref{It.adp1,It.adp2,It.adp3} is given, we define a function $\mathfrak{a} : \mathcal{H} \times \Gamma \rightarrow \Gamma$ by the condition $\mathfrak{a}(h,x) = y$, where $y \in \Gamma$ is the unique solution to $D(x,y) = h$ provided by \cref{It.adp3}. Again we denote the value of $\mathfrak{a}(h,x)$ by $hx$. We claim that $\mathfrak{a}$ is an action. The trivial element $0 \in \mathcal{H}$ satisfies $0x=x$ for all $x \in \Gamma$; if $0x=y$, then $D(x,y) = 0$ and property \cref{It.adp2} implies that $y=x$. Finally let us show that the compatibility condition $k(hx) = (h+k)x$ is satisfied. Set $y = hx$ and $z = ky$; in other words, $D(x,y) = h$ and $D(y,z) = k$. Then $h+k = D(x,y) + D(y,z)$ and by the cocycle condition we have $h+k = D(x,z)$; that is, $(h+k) x = z = k(hx)$.
\end{proof}
\begin{corollary}
\label{Cor.ftransact}
Let $(X,Y)$ be a relative CW complex such that $\dim X \leq 8$ and with $\cohomrelg{7}{X}{Y}{\zn{}} = 0$. Consider a spherical fibration 
\begin{equation*}
S^{7} \longrightarrow \bg{H} \stackrel{\bg{\iota}}{\longrightarrow} \bg{G},
\end{equation*}
and let $S_{X}$ be a vector bundle over $X$ classified by a map $S : X \rightarrow \bg{G}$ as in \cref{Ex.sphfib}. In addition, assume that $S_{X}$ admits an $H$-structure in the sense of \cref{Def.red}. Then the set of $H$-structures on $S_{X}$ extending a given $H$-structure on $Y$ has a free and transitive action of the group
\begin{equation}
\label{Eq.ftransact}
\cohomrelg{8}{X}{Y}{\zn{}_{2}}.
\end{equation}
\end{corollary}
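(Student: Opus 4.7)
The plan is to verify that the secondary difference $D_{1}$ introduced in \cref{Rmk.secdiff} defines an affine difference function in the sense of \cref{Lem.transact}, and then to apply that lemma with $\mathcal{H} = \cohomrelg{8}{X}{Y}{\zn{}_{2}}$ and $\Gamma$ equal to the set of $H$-structures on $S_{X}$ restricting to the prescribed $H$-structure on $Y$.

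First I would record the relevant homotopy groups of the fibre. Since the fibration is spherical with fibre $S^{7}$, the only potentially non-trivial coefficient groups appearing in obstructions or differences up through dimension eight are $\pi_{7}S^{7} \cong \zn{}$ and $\pi_{8}S^{7} \cong \zn{}_{2}$. Accordingly, the primary difference $D_{0}(S_{1}^{+}, S_{0}^{+})$ of \cref{Def.primdiff} takes values in $\cohomrelg{7}{X}{Y}{\zn{}}$, which vanishes by hypothesis, while the secondary difference $D_{1}(S_{1}^{+}, S_{0}^{+})$ of \cref{Rmk.secdiff} takes values in $\cohomrelg{8}{X}{Y}{\zn{}_{2}}$.

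Next I would combine this with the dimension constraint $\dim X \leq 8$. Given any two $H$-structures $S_{0}^{+}, S_{1}^{+}$ on $S_{X}$ agreeing over $Y$, \cref{Rmk.secdiff} guarantees a homotopy relative to $Y$ between their restrictions to the $7$-skeleton, because $D_{0}(S_{1}^{+}, S_{0}^{+}) = 0$ automatically. Consequently the secondary difference is always defined, and $D_{1}(S_{1}^{+}, S_{0}^{+}) = 0$ if and only if the partial homotopy extends over the $8$-skeleton, which, thanks to $\dim X \leq 8$, is a genuine homotopy of $H$-structures relative to $Y$. In particular $D_{1}$ descends to a well-defined function $D_{1} : \Gamma \times \Gamma \to \mathcal{H}$.

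Finally, I would invoke the three properties collected in \cref{Rmk.properties} for the secondary difference: the cocycle identity $D_{1}(S_{2}^{+}, S_{0}^{+}) = D_{1}(S_{2}^{+}, S_{1}^{+}) + D_{1}(S_{1}^{+}, S_{0}^{+})$, the vanishing characterisation $D_{1}(S_{1}^{+}, S_{0}^{+}) = 0 \iff S_{0}^{+} = S_{1}^{+}$ in $\Gamma$, and the realisation property stating that for every $h \in \mathcal{H}$ and every $S_{0}^{+} \in \Gamma$ there is a unique $S_{1}^{+} \in \Gamma$ with $D_{1}(S_{1}^{+}, S_{0}^{+}) = h$. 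These are exactly conditions~\cref{It.adp1,It.adp2,It.adp3} in \cref{Lem.transact}, whose conclusion then produces the desired free and transitive action of $\cohomrelg{8}{X}{Y}{\zn{}_{2}}$ on $\Gamma$. The step I expect to require the most care is the transition from the vanishing of the primary cohomology class to the assertion that $D_{1}$ is defined on the entire set $\Gamma$ of homotopy classes (rather than just on judiciously chosen representatives), which is where both the hypothesis $\cohomrelg{7}{X}{Y}{\zn{}} = 0$ and the independence of the obstruction theory from auxiliary choices made on lower skeleta are genuinely used.
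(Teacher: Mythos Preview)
Your proposal is correct and follows essentially the same route as the paper: use the hypothesis $\cohomrelg{7}{X}{Y}{\zn{}}=0$ to kill the primary difference so that only the secondary difference $D_{1}\in\cohomrelg{8}{X}{Y}{\pi_{8}S^{7}}\cong\cohomrelg{8}{X}{Y}{\zn{}_{2}}$ survives, then feed the three properties recorded in \cref{Rmk.properties} into \cref{Lem.transact}. Your write-up is in fact more explicit than the paper's (you spell out why $\dim X\le 8$ makes $D_{1}=0$ equivalent to a genuine relative homotopy and flag the well-definedness of $D_{1}$ on homotopy classes), but the argument is the same.
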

\begin{proof}
Our hypothesis for the cohomology group $\cohomrelg{7}{X}{Y}{\zn{}}$ implies that two $H$-structures for $S_{X}$ can only differ over the $8$-skeleton $X^{(8)}$. In this case, the secondary difference between two such structures is an element of the group \cref{Eq.ftransact}. Therefore, the properties satisfied by the secondary difference listed at the end of \cref{Rmk.properties} together with \cref{Lem.transact} give \cref{Cor.ftransact}.
\end{proof} 
\section{Real spin representations for $\lie{Spin}{7}$ and $\lie{Spin}{8}$}
\label{Sec.spinrep}
The Clifford algebra $\clif{n}$ associated with the Euclidean space $(\rn{n} , g_{0})$ is defined as the associative $\rn{}$-algebra with unit $1 \in \clif{n}$ generated by vectors $v, w \in \rn{n}$ subject to the Clifford relation 
\begin{equation}
\label{Eq.clifrel}
vw + wv = -2 g_{0}(v,w) 1.
\end{equation}
If $\lbrace e_{1}, \dots, e_{n} \rbrace$ is the canonical orthonormal basis of $\rn{n}$ given by the coordinate vectors, then the set of all monomials $\xi = e_{\mu_{1}} \cdots e_{\mu_{k}}$ with $1 \leq \mu_{1} < \cdots < \mu_{k} \leq n$ generates $\clif{n}$ as a real algebra. In particular,
 $\dim_{\rn{}} \clif{n} = 2^{n}$. The antipodal involution $\alpha : \clif{n} \rightarrow \clif{n}$, $\alpha(\xi) = (-1)^{k} \xi$, decomposes $\clif{n}$ into the direct sum of two eigenspaces $\clifp{n}$ and $\clifi{n}$. The even part $\clifp{n}$ is the subalgebra of all products with an even number of factors. The odd part is formed by elements $\xi \in \clif{n}$ such that $\alpha(\xi) = - \xi$. The orthogonal decomposition $\rn{n+1} = \rn{} \langle e_{0} \rangle \oplus \rn{} \langle e_{1} , \dots ,e_{n} \rangle$, where $e_{0} \in \rn{n+1}$ is the first coordinate vector, induces an isomorphism $P_{n} : \clif{n} \rightarrow \clifp{n+1}$
\begin{equation}
\label{Eq.eveniso}
P_{n}(\xi) = e_{0} e_{\mu_{1}} \cdots e_{0}e_{\mu_{k}}.
\end{equation}
The spin group $\lie{Spin}{n}$ is the subgroup of $\clif{n}$ generated by all monomials $\zeta = v_{1} \cdots v_{2k} \in \clifp{n}$, where $g_{0}(v_{\mu}, v_{\mu}) = 1$ for all $\mu = 1, \dots, 2k$. The Clifford relation \cref{Eq.clifrel} implies that $\zeta^{-1} = v_{2k} \cdots v_{1}$, and from this it follows that the linear transformation given by $x \mapsto \zeta x \zeta^{-1}$ for all $x \in \rn{n}$ is completely determined by an expression of the form $v x v^{-1}$, where $v \in S^{n-1}$. Using equation \cref{Eq.clifrel}, one easily shows that $- v x v^{-1}$ is a reflection across the orthogonal complement of $v$. Moreover, all elements in the special orthogonal group $\lie{SO}{n}$ are obtained as compositions of an even number of reflections. Therefore, every $\zeta \in \lie{Spin}{n}$ defines a rotation of $\rn{n}$ by the formula $\zeta x \zeta^{-1}$.
\begin{definition}
\label{Def.vectrep}
The vector representation $\mathrm{Ad}_{n} : \lie{Spin}{n} \rightarrow \lie{SO}{n}$ is defined by $\mathrm{Ad}_{n}(\zeta) x =\zeta x \zeta^{-1}$, where $\zeta \in \lie{Spin}{n}$ and $x \in \rn{n}$. By the classical Cartan-Dieudonn\'{e} theorem, $\mathrm{Ad}_{n}$ is surjective. On the other hand, $\kr \mathrm{Ad}_{n}$ is generated by $-1 \in \lie{Spin}{n}$. This shows that $\mathrm{Ad}_{n}$ is a double covering that corresponds to the universal cover of $\lie{SO}{n}$ when $n \geq 3$.
\end{definition}
\begin{remark}
\label{Rmk.descend}
Every representation of $\lie{SO}{n}$ pulls-back under $\mathrm{Ad}_{n}$ to a representation of $\lie{Spin}{n}$. However, a representation $\lie{Spin}{n} \rightarrow \lie{GL}{V}$ descends to $\lie{SO}{n}$ if and only if it maps the generator $-1 \in \kr \mathrm{Ad}_{n}$ to the identity map $\mathrm{id} \in \lie{GL}{V}$. By linearity, any representation of $\clif{n} \rightarrow \mathrm{End}_{\rn{}}(V)$ sends $-1 \in \clif{n}$ to $- \mathrm{id} \in \mathrm{End}_{\rn{}}(V)$. Therefore, the restriction of any representation for the Clifford algebra $\clif{n}$ to the spin group $\lie{Spin}{n} \subset \clifp{n}$ gives a representation that does not descend to $\lie{SO}{n}$. 
\end{remark}
In eight dimensions $\clif{8}$ is isomorphic to the algebra $\rn{}(16)$ of $16 \times 16$ matrices with real entries~\cite[Theorem 4.3 p 27]{law.SG}. Hence, up to isomorphism, $\clif{8}$ has a unique real irreducible representation 
\begin{equation}
\label{Eq.clif8}
c : \clif{8} \rightarrow \mathrm{End}_{\rn{}}(\rn{16})
\end{equation}
that endows $\rn{16}$ with a left module structure over $\clif{8}$. From now on, we write $S_{8}$ to denote the Clifford module $(\rn{16}, c)$. The linear transformation $c_{\xi} : S_{8} \rightarrow S_{8}$ corresponding to an element $\xi \in \clif{8}$ under \cref{Eq.clif8} is called Clifford multiplication by $\xi$. 

Consider the oriented volume element $\omega_{8} \in \clif{8}$ defined as the ordered product of all the coordinate vectors in $\rn{8}$, that is, $\omega_{8} = e_{1} \cdots e_{8}$. It belongs to $\lie{Spin}{8}$, commutes with the even part $\clifp{8}$ and satisfies $\omega_{8}^{2} = 1$. Therefore, Clifford multiplication by $\omega_{8}$ splits the irreducible module $S_{8}$ as a direct sum of eigenspaces
\begin{equation*}
\label{Eq.quiral}
S_{8} = S_{8}^{+} \oplus S_{8}^{-},
\end{equation*}
where $S_{8}^{+}$ and $S_{8}^{-}$ correspond to the positive and negative eigenvalue, respectively. Note that Clifford multiplication $c_{v} : S_{8} \rightarrow S_{8}$ interchanges $S_{8}^{+}$ and $S_{8}^{-}$ for every non-zero vector $v \in \rn{8}$. In particular, $S_{8}^{+}$ and $S_{8}^{-}$ are isomorphic subspaces of real dimension $8$. Since $\omega_{8}$ commutes with all even elements of $\clif{8}$, $S_{8}^{+}$ and $S_{8}^{-}$ are both invariant under the action of the subalgebra $\clifp{8}$. 
\begin{definition}
\label{Def.rightspin}
The positive spin representation $\Delta_{8}^{+} : \lie{Spin}{8} \rightarrow \lie{GL}{S_{8}^{+}}$ is obtained by restricting the isomorphism $c : \clif{8} \rightarrow \mathrm{End}_{\rn{}}(S_{8})$ to $\lie{Spin}{8} \subset \clifp{8}$. The negative spin representation $\Delta_{8}^{-} : \lie{Spin}{8} \rightarrow \lie{GL}{S_{8}^{-}}$ is defined similarly.
\end{definition}
In contrast to its eight-dimensional counterpart, the oriented volume element $\omega_{7} \in \clif{7}$ belongs to the center of $\clif{7}$ and satisfies $\omega_{7}^{2} = 1$. In particular, the elements $\pi^{\pm} = 1/2 (1 \pm \omega_{7}) \in \clif{7}$ define projection operators acting on $\clif{7}$ by left-multiplication. The associated subalgebras $\clifq{7}{\pm} = \pi^{\pm} \,\clif{7}$ induce a decomposition 
\begin{equation*}
\label{Eq.clif7quiral}
\clif{7} = \clifq{7}{+} \oplus \clifq{7}{-}.
\end{equation*}
Note that $\alpha(\omega_{7}) = -\omega_{7}$  under the antipodal involution $\alpha : \clif{7} \rightarrow \clif{7}$. It follows that the even part $\clifp{7}$ meets both $\clifq{7}{\pm}$ trivially. Hence, by restricting each projection $\pi^{\pm}$ to $\clifp{7}$ we get two injective linear transformations $\pi^{\pm}_{0} : \clifp{7} \rightarrow \clifq{7}{\pm}$ that induce an isomorphism $\pi_{0}^{-} \circ (\pi_{0}^{+})^{-1}$ from $\clifq{7}{+}$ to $\clifq{7}{-}$. Combining the isomorphism $P_{6} : \clif{6} \simeq \clifp{7}$ given by equation \cref{Eq.eveniso} with the identification $\clif{6} \simeq \rn{}(8)$~\cite[Theorem 4.3 p 27]{law.SG} we conclude that $\clifq{7}{+}$, $\clifq{7}{-}$ and $\clifp{7}$ are all isomorphic to the matrix algebra $\rn{}(8)$. In particular, $\clif{7} \simeq \rn{}(8) \oplus \rn{}(8)$ has two irreducible inequivalent real representations $c^{\pm} : \clif{7} \rightarrow \mathrm{End}_{\rn{}}(S_{8}^{\pm})$ that become equivalent when restricted to the even part $\clifp{7} \simeq \rn{}(8)$.
\begin{definition}
\label{Def.spin7}
The real spin representation $\Delta_{7} : \lie{Spin}{7} \rightarrow \lie{GL}{S_{8}^{+}}$ is obtained by restricting the representation $c^{+} :  \clif{7} \rightarrow \mathrm{End}_{\rn{}}(S_{8}^{+})$ to $\lie{Spin}{7} \subset \clifp{7}$.
\end{definition}
\begin{remark}
\label{Rmk.inner}
Let $S = S_{7}, S_{8}^{+}, S_{8}^{-}$ and denote by $\Delta$ the associated spin representation. Then $S$ admits an inner product $\braket{\cdot}{\cdot}$ such that Clifford multiplication by unit vectors in the corresponding Euclidean space is an orthogonal transformation. In particular, $\Delta$ acts by isometries of $S$. Given that $\lie{Spin}{7}$ and $\lie{Spin}{8}$ are connected, the image of $\Delta$ sits inside the connected component of $\lie{GL}{8}$. After fixing an orientation for $S$ we assume without loss of generality that the spin representations $\Delta_{7}$, $\Delta_{8}^{+}$ and $\Delta_{8}^{-}$ all take values in the special orthogonal group $\lie{SO}{8}$. The spinor space is the Clifford module $S$ endowed with such an orientation and inner product. Spinors in $S_{8}^{+}$ are called positive, or right-handed, while elements in $S_{8}^{-}$ are negative or left-handed spinors.
\end{remark}
\section{Isotropy subgroups of unit spinors}
\label{Sec.sphfibstab}
In this section we exhibit a copy of $\lie{Spin}{7}$ inside $\lie{Spin}{8}$ that identifies with the isotropy subgroup of a unit spinor of right chirality. A similar result can be obtained for left-handed spinors. For this, recall that the oriented volume element
\begin{equation*}
\label{Eq.vol8}
\omega_{8} = e_{1} \cdots e_{8} \in \lie{Spin}{8},
\end{equation*}
anti-commutes with vectors in $\rn{8}$ and satisfies $\omega_{8}^{2} = 1$. It follows that the fibre of the covering map $\mathrm{Ad}_{8} : \lie{Spin}{8} \rightarrow \lie{SO}{8}$ over $-\mathrm{id} \in \lie{SO}{8}$ is given by $\lbrace \omega_{8}, -\omega_{8} \rbrace$. 
\begin{lemma}
\label{Lem.isospin}
Let $\iota^{+} : \lie{Spin}{7} \rightarrow \lie{Spin}{8}$ be the lifting of the spin representation $\Delta_{7} : \lie{Spin}{7} \rightarrow \lie{SO}{8}$ along $\mathrm{Ad}_{8} : \lie{Spin}{8} \rightarrow \lie{SO}{8}$ specified by the condition 
\begin{equation}
\label{Eq.isospin}
\iota^{+}(-1) = \omega_{8}.
\end{equation}
Set $\sigma^{+} = \Delta_{8}^{+} \circ \iota^{+} : \lie{Spin}{8} \rightarrow \lie{SO}{8}$, where $\Delta_{8}^{+}$ is the right-handed spin representation introduced in \cref{Def.rightspin}. Then $\sigma^{+}$ is an embedding that factors though $\mathrm{Ad}_{7}$ via an injective homomorphism $\hat{\sigma}^{+} : \lie{SO}{7} \rightarrow \lie{SO}{8}$ with the following properties.
\begin{enumerate}[label={\alph*)}]
\item There is a right-handed spinor $\psi \in S_{8}^{+}$ with norm $\abs{\psi} = 1$ such that the image $\hat{\sigma}^{+}(\lie{SO}{7})$ coincides with the isotropy subgroup $\mathrm{Stab}(\psi)$.
\item The images of $\sigma^{+}$ and $\hat{\sigma}^{+}$ are exactly equal. In particular, $\sigma^{+}(\lie{Spin}{8}) = \mathrm{Stab}(\psi)$.
\end{enumerate}
\end{lemma}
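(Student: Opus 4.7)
The plan is to carry out the argument in four steps. First I pin down $\iota^{+}$: because $\lie{Spin}{7}$ is simply connected and $\mathrm{Ad}_{8}$ is a covering map, $\Delta_{7}$ admits exactly two lifts to $\lie{Spin}{8}$, differing by multiplication by $-1 \in \kr \mathrm{Ad}_{8}$. By \cref{Rmk.descend} any Clifford representation sends $-1$ to $-\mathrm{id}$, so $\Delta_{7}(-1) = -\mathrm{id}$ on $S_{8}^{+}$ and any lift of $\Delta_{7}$ sends $-1 \in \lie{Spin}{7}$ into the two-element fibre $\lbrace \omega_{8}, -\omega_{8} \rbrace$ above $-\mathrm{id} \in \lie{SO}{8}$. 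The normalisation \cref{Eq.isospin} selects one of them.

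Next I would verify that $\sigma^{+}$ descends through $\mathrm{Ad}_{7}$ to an injective homomorphism $\hat{\sigma}^{+}$. Since $S_{8}^{+}$ is by construction the $+1$-eigenspace of Clifford multiplication by $\omega_{8}$, one has $\Delta_{8}^{+}(\omega_{8}) = \mathrm{id}$ and therefore
\begin{equation*}
\sigma^{+}(-1) = \Delta_{8}^{+}(\iota^{+}(-1)) = \Delta_{8}^{+}(\omega_{8}) = \mathrm{id}.
\end{equation*}
This shows that $\kr \mathrm{Ad}_{7} = \lbrace \pm 1 \rbrace \subset \kr \sigma^{+}$ and $\sigma^{+}$ factors uniquely as $\sigma^{+} = \hat{\sigma}^{+} \circ \mathrm{Ad}_{7}$. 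Simplicity of the Lie algebra of $\lie{Spin}{7}$ together with the fact that its centre equals $\lbrace \pm 1 \rbrace$ imply that the only closed normal subgroups of $\lie{Spin}{7}$ are $\lbrace 1 \rbrace$, $\lbrace \pm 1 \rbrace$, and $\lie{Spin}{7}$ itself; since $\sigma^{+}$ is non-trivial and annihilates $-1$, its kernel must be exactly $\lbrace \pm 1 \rbrace$ and $\hat{\sigma}^{+}$ is injective.

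The heart of the argument is part (a). Through $\hat{\sigma}^{+}$ the space $S_{8}^{+}$ becomes an $8$-dimensional orthogonal representation of $\lie{SO}{7}$. The only irreducible real representations of $\lie{Spin}{7}$ of dimension at most $8$ are the trivial one, the vector representation on $\rn{7}$, and the spin representation $\Delta_{7}$ on $S_{8}^{+}$, and by \cref{Rmk.descend} the last cannot descend to $\lie{SO}{7}$. Since $\sigma^{+}$ does descend, $S_{8}^{+}$ must be reducible as an $\lie{SO}{7}$-module, which forces $S_{8}^{+} \cong \rn{} \oplus \rn{7}$. The trivial summand produces a unit spinor $\psi \in S_{8}^{+}$ fixed by $\hat{\sigma}^{+}(\lie{SO}{7})$; the inclusion $\hat{\sigma}^{+}(\lie{SO}{7}) \subset \mathrm{Stab}(\psi)$ together with the dimension count $\dim \lie{SO}{7} = 21 = \dim \mathrm{Stab}(\psi)$ and the connectedness of both groups then forces equality, which is (a). Assertion (b) is immediate from surjectivity of $\mathrm{Ad}_{7}$: $\sigma^{+}(\lie{Spin}{7}) = \hat{\sigma}^{+}(\mathrm{Ad}_{7}(\lie{Spin}{7})) = \hat{\sigma}^{+}(\lie{SO}{7}) = \mathrm{Stab}(\psi)$.

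I expect the identification step to be the main obstacle, not for computational reasons but because it rests on two non-trivial inputs: the classification of low-dimensional real irreducible representations of $\lie{Spin}{7}$, and the observation packaged into \cref{Rmk.descend} that $\Delta_{7}$ does not descend through $\mathrm{Ad}_{7}$. Everything else reduces to formal manipulations with the covering-space description of $\mathrm{Ad}_{7}$ and $\mathrm{Ad}_{8}$ and with the chirality splitting of $S_{8}$.
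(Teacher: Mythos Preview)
Your argument follows essentially the same route as the paper: verify that $\sigma^{+}$ factors through $\mathrm{Ad}_{7}$ using $\Delta_{8}^{+}(\omega_{8})=\mathrm{id}$, decompose $S_{8}^{+}$ as an $\lie{SO}{7}$-module via the classification of low-dimensional irreducibles to isolate a trivial summand and hence a fixed unit spinor $\psi$, and read off (b) from surjectivity of $\mathrm{Ad}_{7}$. The only noteworthy differences are that you deduce injectivity of $\hat{\sigma}^{+}$ from the classification of closed normal subgroups of $\lie{Spin}{7}$ (the paper instead invokes faithfulness of $\Delta_{7}$ and $\Delta_{8}^{+}$, which sits somewhat uneasily with the computation $\sigma^{+}(-1)=\mathrm{id}$), and that you make the dimension-and-connectedness argument for the equality $\hat{\sigma}^{+}(\lie{SO}{7})=\mathrm{Stab}(\psi)$ explicit where the paper leaves it implicit.
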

\begin{proof}
The spin representations $\Delta_{7}$, $\Delta_{8}^{+}$ are faithful. Therefore, $\iota^{+}$ and $\sigma^{+}$ are both embeddings. To prove that $\sigma^{+}$ factors though $\mathrm{Ad}_{7}$, it is enough to check that $\sigma^{+}$ maps the generator $-1 \in \kr \mathrm{Ad}_{7}$ to the identity of $S_{8}^{+}$. This follows from the definition of $\sigma^{+}$, equation \cref{Eq.isospin}, and the fact that $\omega_{8} \in \lie{Spin}{8}$ acts as the identity on the positive spinor module $S_{8}^{+}$ (see \cref{Def.rightspin}). Set $\hat{\sigma}^{+}(A) = \sigma^{+}(\xi)$, where $\xi \in \lie{Spin}{7}$ is such that $\mathrm{Ad}_{7}(\xi) = A$. Note that $\hat{\sigma}^{+}$ is well defined by condition \cref{Eq.isospin}. To show that $\hat{\sigma}^{+}$ is injective, take $A \in \kr \hat{\sigma}^{+}$ and $\xi \in \lie{Spin}{7}$ such that $\mathrm{Ad}_{7}(\xi) = A$. Then the expression $\mathrm{id} = \hat{\sigma}^{+}(\mathrm{Ad}_{7}(\xi)) = \sigma^{+}(\xi)$ implies that $\xi$ is the identity element since $\sigma^{+}$ is one to one.

The group $\lie{SO}{7}$ has two irreducible real representations of dimension less than or equal to eight, namely, the trivial $1$-dimensional representation, and the fundamental representation of dimension seven. Hence, any decomposition of $\hat{\sigma}^{+}$ into irreducible representations is either a sum of $8$ copies of the trivial representation, or a sum of a single copy of the trivial representation and a copy of the fundamental representation. The first case is impossible since $\hat{\sigma}^{+}$ is injective. Therefore, there exists a non-zero positive spinor $\psi \in S_{8}^{+}$ corresponding to the trivial summand in the aforementioned decomposition into irreducibles that is fixed by the image of $\hat{\sigma}^{+}$. Without loss of generality we choose $\psi$ so that $\abs{\psi} = 1$. Finally, the images of $\sigma^{+}$ and $\hat{\sigma}^{+}$ coincide because $\sigma^{+} = \hat{\sigma}^{+} \circ \mathrm{Ad}_{7}$, and the vector representation $\mathrm{Ad}_{7}$ is surjective.
\end{proof}
\begin{proposition}
\label{Pro.isospin}
There is a unit spinor $\psi \in S_{8}^{+}$ such that the image of the embedding $\iota^{+} : \lie{Spin}{7} \rightarrow \lie{Spin}{8}$ defined in \cref{Lem.isospin} coincides with the isotropy subgroup $\mathrm{Stab}(\psi)$.
\end{proposition}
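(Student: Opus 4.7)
The plan is to lift the $\lie{SO}{8}$-level identification from \cref{Lem.isospin} up to $\lie{Spin}{8}$, keeping the same unit spinor $\psi \in S_{8}^{+}$ supplied by that lemma and reinterpreting $\mathrm{Stab}(\psi)$ now as the isotropy subgroup inside $\lie{Spin}{8}$ under the representation $\Delta_{8}^{+}$.

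First I would determine $\kr \Delta_{8}^{+}$. By \cref{Rmk.descend}, any Clifford representation sends $-1 \in \clif{8}$ to $-\mathrm{id}$, so $\Delta_{8}^{+}(-1) = -\mathrm{id}$ is non-trivial. On the other hand, $S_{8}^{+}$ is by construction the $(+1)$-eigenspace of Clifford multiplication by $\omega_{8}$, hence $\Delta_{8}^{+}(\omega_{8}) = \mathrm{id}$. Because the Clifford representation is faithful, the differential of $\Delta_{8}^{+}$ is injective, so a dimension count identifies $\Delta_{8}^{+}$ as a covering of $\lie{SO}{8}$ with discrete central kernel. Combined with the two computations above, this yields $\kr \Delta_{8}^{+} = \lbrace 1, \omega_{8} \rbrace$.

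Then I would verify both inclusions. For $\iota^{+}(\lie{Spin}{7}) \subset \mathrm{Stab}(\psi)$: any $\xi \in \lie{Spin}{7}$ satisfies $\Delta_{8}^{+}(\iota^{+}(\xi))\psi = \sigma^{+}(\xi)\psi = \psi$, because $\sigma^{+}(\lie{Spin}{7}) = \hat{\sigma}^{+}(\lie{SO}{7})$ already fixes $\psi$ by \cref{Lem.isospin}. Conversely, take $\eta \in \mathrm{Stab}(\psi)$; then $\Delta_{8}^{+}(\eta)$ fixes $\psi$ inside $\lie{SO}{8}$, so by \cref{Lem.isospin} there is $\xi \in \lie{Spin}{7}$ with $\Delta_{8}^{+}(\eta) = \sigma^{+}(\xi) = \Delta_{8}^{+}(\iota^{+}(\xi))$, and therefore $\eta \, \iota^{+}(\xi)^{-1} \in \lbrace 1, \omega_{8} \rbrace$.

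The decisive step — and essentially the only subtlety — is that the normalization \cref{Eq.isospin} renders this kernel ambiguity harmless. Since $\omega_{8} = \iota^{+}(-1)$, both possibilities $\eta = \iota^{+}(\xi)$ and $\eta = \omega_{8}\iota^{+}(\xi) = \iota^{+}(-\xi)$ already lie in $\iota^{+}(\lie{Spin}{7})$. This is precisely why the specific lifting satisfying \cref{Eq.isospin} was singled out in \cref{Lem.isospin}: it is the one under which the full kernel of $\Delta_{8}^{+}$ is absorbed into the image of $\iota^{+}$, forcing $\mathrm{Stab}(\psi) \subset \iota^{+}(\lie{Spin}{7})$ and completing the proof.
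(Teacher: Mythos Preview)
Your argument is correct and follows the same overall path as the paper's proof: take the unit spinor $\psi$ supplied by \cref{Lem.isospin}, then verify both inclusions by comparing $\sigma^{+}$ with $\Delta_{8}^{+}\circ\iota^{+}$. You are, however, more careful than the paper at the decisive step. The paper concludes $\gamma = \iota^{+}(\zeta)$ from $\Delta_{8}^{+}(\gamma) = \Delta_{8}^{+}(\iota^{+}(\zeta))$ by asserting that $\Delta_{8}^{+}$ is faithful, but this is not so: as you correctly note, $\omega_{8}$ lies in $\kr\Delta_{8}^{+}$ because $S_{8}^{+}$ is by definition its $(+1)$-eigenspace. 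Your explicit computation $\kr\Delta_{8}^{+}=\lbrace 1,\omega_{8}\rbrace$ together with the normalization $\iota^{+}(-1)=\omega_{8}$ from \cref{Eq.isospin} is precisely what is needed to absorb the kernel into $\iota^{+}(\lie{Spin}{7})$ and close the argument; it also clarifies why that particular lifting was singled out in \cref{Lem.isospin} in the first place.
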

\begin{proof}
By \cref{Lem.isospin} there is a unit spinor $\psi \in S_{8}^{+}$ such that $\mathrm{Im}(\sigma^{+}) = \mathrm{Stab}(\psi)$. It follows that every element in the image of $\iota^{+}$ stabilizes $\psi$. Conversely, given $\gamma \in \lie{Spin}{8}$ that fixes a unit spinor $\psi \in S_{8}^{+}$ as in \cref{Lem.isospin}, there is an element $\zeta \in \lie{Spin}{7}$ such that $\Delta_{8}^{+}(\gamma) = \sigma^{+}(\zeta)$. On the other hand, $\sigma^{+} = \Delta_{8}^{+} \circ \iota^{+}$. It follows that $\gamma = \iota^{+}(\zeta)$ since $\Delta_{8}^{+}$ is faithful.
\end{proof}
\begin{remark}
\label{Rmk.bigdim}
The periodicity isomorphism $\clif{n + 8} \simeq \clif{n} \otimes \clif{8}$~\cite[Theorem 4.3]{law.SG} implies that the dimension of the positive spinor module $S_{7 + 8k}^{+}$ depends exponentially on $k$ as
\begin{equation*}
\label{Eq.bigdim}
\dim_{\rn{}} S_{7 + 8k}^{+} = 8 \cdot2^{4k}.
\end{equation*}
Thus, the analogue of \cref{Lem.isospin} in dimension $7 + 8k$ does not hold if $k \geq 1$. However, our argument can be easily adapted to prove the existence of a proper and non-trivial subspace of $S_{7 + 8k}^{+}$ fixed by the image of the corresponding embedding $\iota^{+} : \lie{Spin}{7 + 8k} \rightarrow \lie{SO}{8 \cdot2^{4k}}$.
\end{remark}
\begin{remark}
\label{Rmk.spinstrucsph}
There is another copy of $\lie{Spin}{7}$ inside $\lie{Spin}{8}$ characterized as the stabilizer of an element in the vector representation of $\lie{Spin}{8}$. To see this, let $e_{0} \in \rn{8}$ be the first coordinate vector. Define $\mathrm{ev}_{0} : \lie{SO}{8} \rightarrow S^{7}$ by $\mathrm{ev}_{0}(A) = Ae_{0}$. Then the isotropy subgroup $\lie{Stab}{e_{0}} \subset \lie{SO}{8}$ coincides with the fibre $\mathrm{ev}_{0}^{-1}(e_{0})$. Therefore, we have an isomorphism $\lie{SO}{7} \simeq \lie{Stab}{e_{0}}$ that yields a commutative diagram
\begin{equation}
\label{Eq.snspinstruct}
\begin{split}
\xymatrix@R=2mm@C=8mm{
 \lie{Spin}{7} \ar[r] \ar[dd]_{\mathrm{Ad}_{7}} & \lie{Spin}{8} \ar[dd]^{_{\mathrm{Ad}_{8}}} \ar[dr]^{\widetilde{\mathrm{ev}}_{0}} &  \\
 &  & S^{7}  \\
 \lie{SO}{7} \ar[r] &  \lie{SO}{8} \ar[ru]_{\mathrm{ev}_{0}} & 
}
\end{split}
\end{equation}
The bottom row is the principal $\lie{SO}{7}$-bundle of orthonormal frames of $S^{7}$. Thus, the top row is a principal $\lie{Spin}{7}$-bundle that defines a spin structure for $S^{7}$.
\end{remark}
\begin{remark}
\label{Rmk.G2}
The group $\lie{Spin}{7}$ acts transitively on the $7$-sphere formed by unit spinors of $S_{7}$ with isotropy given by the exceptional group $G_{2}$. In particular, there is a diffeomorphism
\begin{equation}
\label{Eq.G2Spin7}
\lie{Spin}{7} / G_{2} \simeq S^{7}.
\end{equation} 
Moreover, any $G_{2}$-subgroup of $\lie{Spin}{7}$ arises in this way~\cite[Theorem 3 Section 2]{vara.SP78.01}. The group $G_{2}$ can also be described as a subgroup of $\lie{Spin}{8}$. Denote by $\iota^{v} : \lie{Spin}{7} \rightarrow \lie{Spin}{8}$ the inclusion map in the top row of \cref{Eq.snspinstruct}. Then the images of $\lie{Spin}{7}$ under the embeddings $\iota^{v}, \iota^{+}$ are not conjugate subgroups of $\lie{Spin}{8}$. Therefore,~\cite[Theorem 5 Section 3]{vara.SP78.01} gives an isomorphism
\begin{equation}
\label{Eq.G2Spin8}
\iota^{v}\left( \lie{Spin}{7} \right) \cap \iota^{+}\left( \lie{Spin}{7} \right) \simeq G_{2}.
\end{equation}
In other words, there is copy of $G_{2}$ inside $\lie{Spin}{8}$ that fixes both a unit spinor of positive chirality and a unit element in the vector representation of $\lie{Spin}{7}$.
\end{remark}
\begin{proposition}
\label{Pro.difeoS7}
Consider the embedding $\iota^{+} : \lie{Spin}{7} \rightarrow \lie{Spin}{8}$ defined in \cref{Lem.isospin}. Then there is a diffeomorphism between the homogeneous space $\lie{Spin}{8} / \iota^{+} (\lie{Spin}{7})$ and the standard $7$-sphere $S^{7}$.
\end{proposition}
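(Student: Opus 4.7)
The strategy is to realize $S^{7}$ as an orbit of $\lie{Spin}{8}$ acting on $S_{8}^{+}$ via the right-handed spin representation $\Delta_{8}^{+}$, and then to read off the diffeomorphism from the orbit-stabilizer correspondence. By \cref{Rmk.inner}, $\Delta_{8}^{+}$ takes values in $\lie{SO}{S_{8}^{+}} \simeq \lie{SO}{8}$, so it restricts to a smooth action of $\lie{Spin}{8}$ on the unit sphere of $S_{8}^{+}$. By \cref{Pro.isospin}, the isotropy subgroup of a suitably chosen unit spinor $\psi \in S_{8}^{+}$ is precisely $\iota^{+}(\lie{Spin}{7})$.

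The first substantive step is to verify that the action of $\lie{Spin}{8}$ on the unit sphere of $S_{8}^{+}$ is transitive. I would argue as follows. The kernel of the representation $\Delta_{8}^{+}$ is a normal, hence central, subgroup of $\lie{Spin}{8}$. From \cref{Def.rightspin}, the even volume element $\omega_{8}$ acts as the identity on $S_{8}^{+}$, while $-1 \in \lie{Spin}{8}$ is sent to $-\mathrm{id}$ by Clifford multiplication (see \cref{Rmk.descend}). Hence $\kr \Delta_{8}^{+} = \lbrace 1, \omega_{8} \rbrace$ and $\Delta_{8}^{+}$ descends to an injective Lie group homomorphism from $\lie{Spin}{8} / \lbrace 1 , \omega_{8} \rbrace$ into $\lie{SO}{8}$. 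Since both groups are compact, connected, and of the same dimension $28$, the image is a closed subgroup containing a neighbourhood of the identity, and therefore equals $\lie{SO}{8}$. Because $\lie{SO}{8}$ acts transitively on $S^{7}$, so does $\lie{Spin}{8}$ via $\Delta_{8}^{+}$.

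With transitivity established, the orbit map $\lie{Spin}{8} \rightarrow S^{7}$, $\gamma \mapsto \Delta_{8}^{+}(\gamma) \psi$, is a smooth surjection whose fibres are precisely the left cosets of $\mathrm{Stab}(\psi) = \iota^{+}(\lie{Spin}{7})$. It therefore descends to a smooth continuous bijection
\begin{equation*}
\lie{Spin}{8} / \iota^{+}(\lie{Spin}{7}) \longrightarrow S^{7}.
\end{equation*}
Since the domain is compact and the codomain is Hausdorff, this bijection is a homeomorphism. The standard fact that homogeneous spaces of Lie groups inherit a unique smooth structure making the orbit map a submersion then promotes it to a diffeomorphism; equivalently, both sides are smooth manifolds of dimension $7$ and the map has everywhere injective differential by transitivity of the action, so it is a local and hence global diffeomorphism.

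The only step that is not completely formal is the surjectivity of $\Delta_{8}^{+}$ onto $\lie{SO}{8}$, and this is the point I would take most care with. An alternative route, should one prefer, is to invoke triality: the three $8$-dimensional representations $\mathrm{Ad}_{8}, \Delta_{8}^{+}, \Delta_{8}^{-}$ are permuted by outer automorphisms of $\lie{Spin}{8}$ (as recalled in the introduction), and since $\mathrm{Ad}_{8}$ is surjective onto $\lie{SO}{8}$ the same must hold for $\Delta_{8}^{+}$. Either approach reduces the proposition to the transitivity of $\lie{SO}{8}$ on $S^{7}$ together with \cref{Pro.isospin}.
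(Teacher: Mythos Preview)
Your argument is correct, but it follows a different route from the paper's. The paper does not prove transitivity of $\Delta_{8}^{+}$ directly; instead it builds the diffeomorphism from the $G_{2}$ structure assembled in \cref{Rmk.G2}. Concretely, the paper considers the map
\[
f : \iota^{v}(\lie{Spin}{7})/G_{2} \longrightarrow \lie{Spin}{8}/\iota^{+}(\lie{Spin}{7})
\]
induced by the inclusion $\iota^{v}(\lie{Spin}{7}) \hookrightarrow \lie{Spin}{8}$, uses \cref{Eq.G2Spin8} to see that $f$ is an injective immersion, and then appeals to \cref{Eq.G2Spin7} and a dimension count to conclude that $f$ is a diffeomorphism from $S^{7}$. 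Your approach is more direct: once \cref{Pro.isospin} has identified $\iota^{+}(\lie{Spin}{7})$ as a stabilizer, the orbit--stabilizer theorem gives the result as soon as one knows $\Delta_{8}^{+}$ surjects onto $\lie{SO}{8}$, and your kernel/dimension argument (or the triality shortcut) handles that cleanly. The paper's route, by contrast, avoids computing $\ker \Delta_{8}^{+}$ and instead exploits the interplay between the two non-conjugate copies of $\lie{Spin}{7}$ already set up for later use. One small point: when you conclude $\ker \Delta_{8}^{+} = \lbrace 1, \omega_{8} \rbrace$ you should also note that $-\omega_{8}$ acts by $-\mathrm{id}$ on $S_{8}^{+}$, so that all four central elements are accounted for.
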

\begin{proof}
Consider the commutative diagram
\begin{equation}
\label{Eq.g2spin7}
\begin{split}
\xymatrix@R=8mm@C=8mm{
G_{2} \ar[r] \ar[d] & \iota^{v} (\lie{Spin}{7}) \ar[d] \ar[r] &  \iota^{v} (\lie{Spin}{7}) / G_{2} \ar[d]^{f}  \\
\iota^{+} (\lie{Spin}{7}) \ar[r] & \lie{Spin}{8} \ar[r] & \lie{Spin}{8} / \iota^{+} (\lie{Spin}{7})
}
\end{split}
\end{equation}
where all the arrows on the leftmost square of \cref{Eq.g2spin7} are inclusion maps. The right square in \cref{Eq.g2spin7} defines a smooth map $f$ between homogeneous manifolds. Equation \cref{Eq.G2Spin8} implies that $f$ is an injective immersion. Moreover, for dimensional reasons, the differential of $f$ is an isomorphism at every point of its domain. On the other hand, \cref{Eq.G2Spin7} induces a diffeomorphism between $\iota^{v} (\lie{Spin}{7}) / G_{2}$ and the standard $7$-sphere $S^{7}$. By compactness of $S^{7}$, $f$ is a proper injective immersion, and in particular a smooth embedding onto its image. The regular value theorem implies that $f$ is surjective and therefore a diffeomorphism.
\end{proof}
\begin{corollary}
\label{Cor.difeoRP7}
Thee is a diffeomorphism between the real projective space $\proj{R}{7}$ and the homogeneous space $\lie{SO}{8} / \Delta_{7}(\lie{Spin}{7})$, where $\Delta_{7} : \lie{Spin}{7} \rightarrow \lie{SO}{8}$ is the real spin representation as in \cref{Def.spin7}.
\end{corollary}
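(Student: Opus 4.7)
The plan is to derive the corollary from \cref{Pro.difeoS7} by quotienting the double cover $\mathrm{Ad}_{8} : \lie{Spin}{8} \to \lie{SO}{8}$ by its kernel $\lbrace \pm 1 \rbrace$. Since $\mathrm{Ad}_{8} \circ \iota^{+} = \Delta_{7}$ by the lifting condition imposed in \cref{Lem.isospin}, the covering $\mathrm{Ad}_{8}$ sends $\iota^{+}(\lie{Spin}{7})$ onto $\Delta_{7}(\lie{Spin}{7})$ and therefore descends to a smooth surjection
\begin{equation*}
\pi : \lie{Spin}{8}/\iota^{+}(\lie{Spin}{7}) \longrightarrow \lie{SO}{8}/\Delta_{7}(\lie{Spin}{7}).
\end{equation*}

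The first key step is to check that $-1 \notin \iota^{+}(\lie{Spin}{7})$: if $\iota^{+}(\zeta) = -1$ for some $\zeta \in \lie{Spin}{7}$, then $\Delta_{7}(\zeta) = \mathrm{Ad}_{8}(-1) = \mathrm{id}$, so $\zeta = 1$ by faithfulness of $\Delta_{7}$, which contradicts $\iota^{+}(1) = 1 \neq -1$. Combined with the centrality of $-1$, this ensures that $\kr \mathrm{Ad}_{8} = \lbrace \pm 1 \rbrace$ acts freely by left multiplication on the coset space, with $\mathrm{Ad}_{8}^{-1}(\Delta_{7}(\lie{Spin}{7})) = \iota^{+}(\lie{Spin}{7}) \sqcup (-1)\iota^{+}(\lie{Spin}{7})$; in other words, $\pi$ is a principal $\zn{}_{2}$-bundle whose deck transformation is left multiplication by $-1$.

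It remains to match this deck transformation with the antipodal involution on $S^{7}$ under the diffeomorphism of \cref{Pro.difeoS7}. For this I would pass to the orbit model provided by \cref{Pro.isospin}: picking a unit spinor $\psi \in S_{8}^{+}$ with $\mathrm{Stab}(\psi) = \iota^{+}(\lie{Spin}{7})$, the orbit map $g \mapsto \Delta_{8}^{+}(g)\psi$ realizes a $\lie{Spin}{8}$-equivariant diffeomorphism $\lie{Spin}{8}/\iota^{+}(\lie{Spin}{7}) \simeq S^{7}$, with transitivity following from the dimension count $28 - 21 = 7 = \dim S^{7}$ and the connectedness of $\lie{Spin}{8}$. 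Since $\Delta_{8}^{+}$ is the restriction of the Clifford representation $c : \clif{8} \to \mathrm{End}_{\rn{}}(S_{8})$ and $c(-1) = -\mathrm{id}$, left multiplication by $-1$ corresponds precisely to the antipodal map on $S^{7}$. Taking quotients then gives
\begin{equation*}
\lie{SO}{8}/\Delta_{7}(\lie{Spin}{7}) \simeq S^{7}/\lbrace \pm \mathrm{id} \rbrace = \proj{R}{7}.
\end{equation*}
The only delicate step is the verification $-1 \notin \iota^{+}(\lie{Spin}{7})$; everything else is formal once the covering picture is in place.
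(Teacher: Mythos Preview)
Your proposal is correct and follows the same strategy as the paper: descend $\mathrm{Ad}_{8}$ to the coset spaces and identify the resulting $2:1$ map with the antipodal quotient $S^{7} \to \proj{R}{7}$. Your argument is more detailed than the paper's---notably, you verify $-1 \notin \iota^{+}(\lie{Spin}{7})$ and use the orbit model via $\Delta_{8}^{+}$ (rather than the identification of \cref{Pro.difeoS7}) to make the antipodal action of $-1$ manifest---but the overall route is the same.
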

\begin{proof}
By definition, $\Delta_{7} = \mathrm{Ad}_{8} \circ \iota^{+}$. Hence, $\mathrm{Ad}_{8}$ descends to a well defined map $\tilde{f} : \lie{Spin}{8} / \iota^{+}(\lie{Spin}{7}) \rightarrow \lie{SO}{8} / \Delta_{7}(\lie{Spin}{7})$ that is compatible with the diffeomorphism $S^{7} \simeq \lie{Spin}{8} / \iota^{+}(\lie{Spin}{7})$ given by~\cref{Pro.difeoS7}. Since $\mathrm{Ad}_{8}(\xi) = \mathrm{Ad}_{8}(-\xi)$, $\tilde{f}$ is also compatible with the standard antipodal map $S^{7} \rightarrow \proj{R}{7}$.
\end{proof}
\section{The classification of $\lie{Spin}{7}$-structures in dimension eight}
\label{Sec.spin7}
\subsection{Existence and uniqueness of $G_{2}$-structures}
\label{Sub.g2struct}
The exceptional Lie group $G_{2} \subset \lie{SO}{7}$ is simply-connected. Consequently, it admits a lifting along the universal covering $\mathrm{Ad}_{7} : \lie{Spin}{7} \rightarrow \lie{SO}{7}$. Moreover, the image of any such lifting is given by the stabilizer of a unit spinor in $S_{7}$ (see \cref{Rmk.G2}). Arguing as in \cref{Cor.difeoRP7}, one shows that the vector representation $\mathrm{Ad}_{7}$ descends to a well defined map $\lie{Spin}{7} / G_{2} \rightarrow \lie{SO}{7}/ G_{2}$ that is equivalent to the standard double cover $S^{7} \rightarrow \proj{R}{7}$. This gives a homotopy equivalence between the real projective space $\proj{R}{7}$ and the homotopy fibre of the map $\bg{G_{2}} \rightarrow \bg{\lie{SO}{7}}$ induced by the canonical inclusion. From this we get the commutative diagram below, where the horizontal rows are Hurewicz fibrations.
\begin{equation}
\label{Eq.hurG2}
\begin{split}
\xymatrix@R=8mm@C=8mm{
S^{7} \ar[d]_{2:1} \ar[r] & \bg{G_{2}} \ar[r] \ar@{=}[d] & \bg{\lie{Spin}{7}} \ar[d]^{\mathrm{\bg{Ad}_{7}}} \\
\proj{R}{7} \ar[r] & \bg{G_{2}} \ar[r] & \bg{\lie{SO}{7}}  
}
\end{split}
\end{equation}
A $G_{2}$-structure on a compact and oriented $7$ manifold $M$ is a reduction of the structure group of the tangent bundle $TM$, from $\lie{SO}{7}$ down to the subgroup $G_{2}$. The commutative diagram \cref{Eq.hurG2} and \cref{Def.obscocycle} imply that the second Stiefel-Whitney class $w_{2}(M) \in \cohomg{2}{M}{\pi_{1}\proj{R}{7}}$ is the complete obstruction to the existence of a $G_{2}$-structure on $M$. In other words, an oriented $7$-manifold admits a $G_{2}$-structure if and only if it is spin.

Suppose that $M$ is spin, and let $S_{M}$ be the spinor bundle associated with some spin structure on $M$. Since $S_{M}$ has rank eight, a generic smooth section of $S_{M}$ vanishes in a submanifold of dimension $-1$. Therefore, the spinor bundle $S_{M}$ carries a nowhere vanishing spinor field that defines a $G_{2}$-structure on $M$, and any $G_{2}$-structure on $M$ arises in this way.
\begin{remark}
\label{Rmk.uniqG2}
The difference between two $G_{2}$-structures on $M$ is completely determined by the primary difference \cref{Eq.primdiff} that takes values in $\cohomg{7}{M}{\pi_{7}S^{7}} = \cohomg{7}{M}{\zn{}}$. The orientation on $M$ establishes an isomorphism between $\cohomg{7}{M}{\zn{}}$ and the group of integers $\zn{}$. Hence, \cref{Lem.transact} implies that $\zn{}$ acts freely and transitively on the set of $G_{2}$-structures on a spin $7$-manifold $M$~\cite[Lemma 1.1]{crownord.NIG2S.15}. 
\end{remark}
\subsection{Existence and uniqueness of $\lie{Spin}{7}$-structures}
\label{Sub.spin7struct}
Up to conjugacy, there are two $\lie{Spin}{7}$-subgroups inside $\lie{SO}{8}$~\cite[Theorem 3 Section 1]{vara.SP78.01}. A distinguished representative for one of them is given by the image of the real spin representation $\Delta_{7} : \lie{Spin}{7} \rightarrow \lie{SO}{8}$ introduced in \cref{Def.spin7}. Consequently, we can define a $\lie{Spin}{7}$-structure on an oriented $8$-manifold $W$ as a reduction of the structure group of the tangent bundle $TW$ to the subgroup $\Delta_{7}(\lie{Spin}{7}) \subset \lie{SO}{8}$. On the other hand, the embedding $\iota^{+} : \lie{Spin}{7} \rightarrow \lie{Spin}{8}$ defined in \cref{Lem.isospin} gives a particular lifting for $\Delta_{7}$. \cref{Pro.isospin} characterizes the image of $\iota^{+}$ as the stabilizer of a unit spinor in $S_{8}^{+}$. Therefore, a $\lie{Spin}{7}$-strucutre on $W$ yields a spin structure on $W$ with a distinguished reduction to the isotropy subgroup of a unit spinor of positive chirality.

Conversely, suppose that $W$ is spin and let $S_{W}^{+}$ be the positive spinor bundle of $W$. Recall that $S_{W}^{+}$ is obtained as an associated vector bundle
\begin{equation*}
\label{Eq.assocpos}
S_{W}^{+} = \slashed{P} \times_{\Delta_{8}^{+}} S_{8}^{+},
\end{equation*}
where $\slashed{P}$ is a principal $\lie{Spin}{8}$-bundle corresponding to a spin structure on $W$, and $\Delta_{8}^{+} : \lie{Spin}{8} \rightarrow \lie{SO}{8}$ is the positive spin representation introduced in \cref{Def.rightspin}. Then $S_{W}^{+}$ admits a nowhere vanishing section if and only if the Euler class $e(S_{W}^{+}) \in \cohomg{8}{W}{\zn{}}$ vanishes. This last condition is equivalent to the existence of a reduction of the structure group of $S_{W}^{+}$ to the subgroup $\iota^{+}(\lie{Spin}{7}) \subset \lie{Spin}{8}$.

In summary, $W$ admits a $\lie{Spin}{7}$-structure if and only if $W$ is spin and the Euler class of the associated positive spinor bundle vanishes~\cite[Theorem 3.4]{gragree.STTA.70}. 
\begin{proof}[Proof of \cref{Thm.mainA}]
Consider the embedding $\iota^{+} : \lie{Spin}{7} \rightarrow \lie{Spin}{8}$. By \cref{Pro.difeoS7} there is a diffeomorphism between the homogeneous space $\lie{Spin}{8} / \iota^{+} (\lie{Spin}{7})$ and the standard $7$-sphere. It follows that the homotopy fibre of the induced map $\bg{\iota}^{+}$ is homotopy equivalent to $S^{7}$. In particular, we have a Hurewicz fibration
\begin{equation*}
\label{Eq.spin7fib}
S^{7} \longrightarrow \bg{\lie{Spin}{7}} \stackrel{\bg{\iota}^{+}}{\longrightarrow} \bg{\lie{Spin}{8}}.
\end{equation*}
Let $W$ be a compact $8$-manifold endowed with a $\lie{Spin}{7}$-structure. Write $\partial W$ to denote the (possibly empty) boundary. Fix a spin structure and let $S^{+}: W \rightarrow \bg{\lie{Spin}{8}}$ be the classifying map corresponding to the positive spinor bundle $S^{+}_{W}$. According to \cref{Def.red}, a $\lie{Spin}{7}$-structure for $W$ is the homotopy class represented by a map $\hat{S} : W \rightarrow \lie{BSpin}{7}$ lifting $S^{+}$ along the projection $\bg{\iota}^{+}$ up to homotopy. The discussion in \cref{Sub.g2struct} implies that $\hat{S}$ restricts to a $G_{2}$-structure on the boundary $\partial W$.

By \cref{Def.primdiff}, the primary difference between two $\lie{Spin}{7}$-structures on $W$ that coincide on $\partial W$ is an element of $\cohomrelg{7}{W}{\partial W}{\zn{}}$. Poincar\'{e} duality and the Hurewicz homomorphism imply that the cohomology group $\cohomrelg{7}{W}{\partial W}{\zn{}}$ is isomorphic to the abelianization of the fundamental group $\pi_{1}W$. Thus, our hypothesis guarantee that $\cohomrelg{7}{W}{\partial W}{\zn{}} = 0$. Now it is clear that \cref{Thm.mainA} is a special case of \cref{Cor.ftransact} applied to the CW pair $(X, Y)= (W, \partial W)$.
\end{proof}
\begin{remark}
The spinor bundle of a compact spin $8$-manifold $W$ splits as the sum of positive and negative spinor bundles $S_{W}^{+}$, $S_{W}^{-}$. Clifford multiplication by a vector field $v \in \Gamma(TW)$ establishes an isomorphism $c_{v} : S_{W}^{+} \rightarrow S_{W}^{-}$ that reverts chirality of spinors. On the other hand, a $\lie{Spin}{7}$-structure on $W$ is characterized by a unit positive spinor field $\psi \in \Gamma(S_{W}^{+})$. In particular, $c_{v}( \psi) \in \Gamma(S_{W}^{-})$ vanishes over the zero-locus of $v$. This means that Clifford multiplication $c_{v}$ induces a natural pairing between $\lie{Spin}{7}$-structures on $W$ when the Euler characteristic of $W$ equals zero (eg if $\partial W = \emptyset$). By~\cite[Theorem 3.4]{gragree.STTA.70}, the Euler classes $e(S_{W}^{+})$, $e(S_{W}^{-})$ differ by $e(TW)$. Hence, the vanishing of any two $e(S_{W}^{+})$, $e(S_{W}^{+})$, $e(TW)$ implies the vanishing of the third. Therefore, \cref{Thm.mainB} is non-trivial only when $W$ is closed.
\end{remark}
A well known result by Milnor~\cite{mil.SSM.63} says that the spin cobordism group in dimension seven is trivial (see Stong~\cite[Chapter XI]{stong.NCT} for details). This means that any compact spin $7$-manifold $M$ is the spin boundary of a compact spin $8$-manifold $W$. Crowley and Nordstr\"{o}m improved this result by showing that there is always some $W$ with a $\lie{Spin}{7}$-structure~\cite[Lemma 3.4]{crownord.NIG2S.15}. Using this fact, the authors introduced a pair of invariants that completely determine the deformation class of a $G_{2}$-structure on $M$; that is, the class of a geometric $G_{2}$-structure identified under homotopy equivalence and diffeomorphism. This program ultimately shows that $M$ posses at least $24$ deformation classes of $G_{2}$-structures~\cite[Theorem 1.3]{crownord.NIG2S.15}. We expect to complete the classification of $\lie{Spin}{7}$-structures up to diffeomorphism in a future publication.
\bibliography{Bib_Spin7_G2}
\bibliographystyle{abbrv}
\end{document}